\newtheorem{theorem}{Theorem}[section]
\newtheorem{lemma}[theorem]{Lemma}
\theoremstyle{definition}
\newtheorem{remark}[theorem]{Remark}
\numberwithin{equation}{section}
\begin{document}
\title[Beurling numbers whose number of factors lies in a residue class]{Beurling numbers whose number of prime factors lies in a specified residue class}

\author[G. Debruyne]{Gregory Debruyne}
\thanks{G. Debruyne gratefully acknowledges support by the FWO through a postdoctoral fellowship. He thanks the Belgian American Educational Foundation for the opportunity to visit the University of Illinois at Urbana-Champaign for the academic year 2019-2020, during which this research was carried out.}
\address{G. Debruyne\\ Department of Mathematics: Analysis, Logic and Discrete Mathematics \\ Ghent University\\ Krijgslaan 281 Gebouw S8\\ B 9000 Gent\\ Belgium}
\email{gregory.debruyne@UGent.be}
\subjclass[2010]{11N25, 11N37, 11N80.}
\keywords{Beurling numbers; generalized primes; Hal\'asz theorem; residue class; number of prime factors}

\begin{abstract}
We find asymptotics for $S_{K,c}(x)$, the number of positive integers below $x$ whose number of prime factors is $c \; \mathrm{mod}\; K$. We study this question in the context of Beurling integers. 
\end{abstract}

\maketitle

\section{Introduction}

In classical prime number theory the relation $M(x) = o(x)$ for the the summatory function of the M\"obius function is well-known to be equivalent to the Prime Number Theorem (PNT). It says that asymptotically the integers\footnote{The squarefree integers to be precise. However, a simple elementary argument shows that the word ``squarefree" may be omitted.} with an even number of prime factors match those with an odd number of prime factors. Here we will study this question for arbitrary residue classes.\par

Let $K \geq 2$ be an integer and let $c$ be an integer lying in $[0,K)$. In this article we shall find an asymptotic formula for $S_{K,c}(x)$, the number of Beurling integers below $x$ whose number of prime factors is $c \;\mathrm{mod}\; K$. We shall show that in the classical integers case
\begin{equation} \label{eqpresska}
 S_{K,c}(x) \sim \frac{x}{K}.
\end{equation}
In the proof we aim to use as little information as possible on the integers and the primes. In fact, we shall only use a Chebyshev upper bound on the primes and density on the integers. We formalize this approach with the use of Beurling prime number systems.\par

A Beurling generalized prime number system is a sequence 
\begin{equation} \label{eqpresdefbeur}
 1 < p_{1} \leq p_{2} \leq \dots \rightarrow \infty.
\end{equation}
The generalized integers of the system are then formed by taking the multiplicative semigroup generated by the generalized primes and $1$. We usually write $p_{j}$ for generalized primes and $n_{j}$ for generalized integers and omit the subscripts if there is no risk of confusion. We have the counting functions $\pi(x)$ and $N(x)$ for the primes and integers respectively. The Riemann weighted prime counting function is defined as usual by
\begin{equation} \label{eqpresdefrie}
 \Pi(x) =  \sum_{k = 1}^{\infty} \frac{\pi(x^{1/k})}{k}.
\end{equation}
The zeta function
\begin{equation} \label{eqpreszeta}
 \zeta(s) = \sum_{n} n^{-s} = \int^{\infty}_{1^{-}} x^{-s} \mathrm{d}N(x),
\end{equation} 
defined on $\operatorname*{Re} s > 1$ (under the hypothesis $N(x) = O(x)$) is an indispensable tool for the study of these number systems via analytical methods.\par
Central questions in this theory involve how information on the integers has consequences for the primes and vice-versa. We refer to \cite{d-zbook} for a detailed account of Beurling generalized numbers. We mention that in this discussion we only concern ourselves with discrete Beurling numbers systems as defined above and not with more general definitions of Beurling number systems. The reason for doing so is that there is no straightforward generalization to the more general framework for the concepts we are working with here.\par

The main goal of this paper is to prove the following theorem.

\begin{theorem} \label{thprescheb}
Suppose a Beurling number system satisfies a Chebyshev upper bound $\pi(x) \ll x/\log x$ and has a positive density $N(x) \sim ax$, then
\begin{equation} \label{eqpresrb}
 S_{K,c}(x) \sim \frac{ax}{K}.
\end{equation}
\end{theorem}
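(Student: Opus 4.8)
The starting point is a roots-of-unity filter. Writing $\omega=e^{2\pi i/K}$ and letting $\Omega(n)$ denote the number of prime factors of a Beurling integer $n$ counted with multiplicity, we have $\mathbf 1\{\Omega(n)\equiv c\ (\mathrm{mod}\ K)\}=\frac1K\sum_{j=0}^{K-1}\omega^{-jc}\omega^{j\Omega(n)}$, so
\[
S_{K,c}(x)=\frac1K\sum_{j=0}^{K-1}\omega^{-jc}\sum_{n\le x}z_j^{\Omega(n)},\qquad z_j:=\omega^{j}.
\]
The $j=0$ summand is $\frac1K N(x)\sim\frac{ax}{K}$, the desired main term, so Theorem~\ref{thprescheb} reduces to proving
\[
M_z(x):=\sum_{n\le x}z^{\Omega(n)}=o(x)
\]
for every $K$-th root of unity $z\ne1$. (When $K$ is even and $z=-1$ this is the assertion $\sum_{n\le x}\lambda(n)=o(x)$ for the Beurling Liouville function, which is of prime-number-theorem flavour, so one should not expect a purely elementary argument.)

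Fix such a $z$ and set $f(n)=z^{\Omega(n)}$: this is completely multiplicative, $|f|\equiv1$, $f(p)=z$ for every generalized prime $p$, and its Dirichlet series $F(s)=\prod_p(1-zp^{-s})^{-1}$ converges for $\operatorname{Re}s>1$ since $N(x)=O(x)$. The plan is to bound $M_z(x)$ by a Halász-type mean-value theorem valid for multiplicative functions bounded by $1$ on any Beurling system obeying a Chebyshev upper bound and a density. Such a theorem is obtained by the Halász--Montgomery method: after a mild smoothing one expresses $M_z(x)$ as a Perron/Parseval integral of $F(s)/s$ over the segment $\operatorname{Re}s=1+1/\log x$, $|\operatorname{Im}s|\le T$, plus a tail; the Chebyshev hypothesis $\pi(x)\ll x/\log x$ controls $\zeta'/\zeta$ and hence the tail, while $N(x)\sim ax$ gives $\zeta(s)\sim a/(s-1)$ near $s=1$, and the whole estimate reduces to controlling $\sup_{|t|\le T}|F(1+\tfrac1{\log x}+it)|$. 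The conclusion one wants is that $\frac1x M_z(x)\to0$ unless $f$ is \emph{pretentious}, i.e.\ unless $\displaystyle\inf_{t\in\R}\sum_{p\le x}\frac{1-\operatorname{Re}(f(p)p^{-it})}{p}$ stays bounded as $x\to\infty$.

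It then remains to rule out pretentiousness of $f=z^{\Omega}$. For $t=0$ one has $\sum_{p\le x}\frac{1-\operatorname{Re}(z)}{p}=(1-\operatorname{Re}z)\sum_{p\le x}\tfrac1p$ with $1-\operatorname{Re}z>0$, and $\sum_p 1/p$ diverges because $\log\zeta(\sigma)=\sum_p p^{-\sigma}+O(1)\to\infty$ as $\sigma\to1^+$ (the pole of $\zeta$ being furnished by the density hypothesis), so the sum tends to $\infty$. The case $t\ne0$ is where the real work lies. If $\sum_{p\le x}\frac{1-\operatorname{Re}(zp^{-it})}{p}$ remained bounded, then $zp^{-it}$, and therefore $(zp^{-it})^{K}=p^{-iKt}$, would be close to $1$ for the $1/p$-weighted bulk of primes $p\le x$; thus $\sum_p\frac{1-\cos(Kt\log p)}{p}<\infty$, so the primes concentrate near the geometric progression $\{e^{2\pi m/(Kt)}\}_{m\ge1}$, which forces the generalized integers to concentrate near the same progression and hence $N(x)=O(\log x)$, contradicting $N(x)\sim ax$. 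Turning this last implication into a rigorous argument — converting a $1/p$-weighted clustering statement about the generalized primes into an honest upper bound on $N(x)$ incompatible with positive density, and doing so uniformly enough to feed the Halász estimate — is the step I expect to be the main obstacle, and it is here that the Chebyshev upper bound re-enters in an essential way. Granted this, $\inf_t\sum_{p\le x}\frac{1-\operatorname{Re}(f(p)p^{-it})}{p}\to\infty$, the Halász bound yields $M_z(x)=o(x)$, and summing over $j$ gives $S_{K,c}(x)\sim ax/K$.
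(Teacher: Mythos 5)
Your skeleton coincides with the paper's: the roots-of-unity filter, the reduction to showing $\sum_{n\le x}z^{\Omega(n)}=o(x)$ for each $K$-th root of unity $z=e^{2\pi iq/K}\ne 1$, and the appeal to a Hal\'asz-type mean-value theorem for Beurling systems. (The paper does not re-derive Hal\'asz; it invokes Theorem 2.3 of the cited Debruyne--Maes--Vindas paper, splitting the prime-power weight as a constant $e^{2\pi iq/K}$ plus a piece supported on higher prime powers, the latter controlled by the Chebyshev bound. Your description of the Hal\'asz--Montgomery machinery is a plausible account of what that theorem delivers.) The genuine gap is exactly the step you flag yourself: ruling out the "pretentious'' case $t\ne 0$, i.e.\ verifying that the generating function satisfies $\hat F_q(\sigma+it)=o(1/(\sigma-1))$. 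Your proposed mechanism --- boundedness of $\sum_p(1-\cos(Kt\log p))/p$ forces the primes onto a geometric progression, hence $N(x)=O(\log x)$ --- is both unexecuted and incorrect as stated: generalized integers are counted with multiplicity, so a system whose primes all lie on $\{r^m\}$ can perfectly well have $N(x)\asymp x$ (what fails is only the \emph{asymptotic} $N(x)\sim ax$); and converting a $1/p$-weighted clustering statement into a pointwise structural statement about the primes is precisely the hard analysis you do not supply.

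The paper's resolution is much shorter and needs no structure theory for the primes. One proves the elementary trigonometric inequality $M-1-M\cos x+\cos(Kx)\ge 0$ for $M\ge K^2$ (a generalization of $3+4\cos\theta+\cos 2\theta\ge 0$), integrates it against $x^{-\sigma}\,\mathrm{d}\Pi(x)$ with phase $t\log x-2\pi q/K$, and exponentiates to obtain
\[
\zeta(\sigma)^{M-1}\exp\bigl(-M\operatorname{Re}(e^{2\pi iq/K}\log\zeta(\sigma+it))\bigr)\,\bigl|\zeta(\sigma+iKt)\bigr|\ \ge\ 1 .
\]
The density hypothesis $N(x)\sim ax$ supplies both $\zeta(\sigma)\sim a/(\sigma-1)$ and, directly, $\zeta(\sigma+iKt)=o_t(1/(\sigma-1))$ for $t\ne 0$ (the Mellin--Stieltjes transform of $\mathrm{d}N-a\,\mathrm{d}x$ is $o(1/(\sigma-1))$ while $as/(s-1)$ is bounded away from $s=1$), which gives the pointwise estimate; the case $t=0$ follows from $|\zeta(\sigma)|^{\cos(2\pi q/K)}=o(1/(\sigma-1))$ since $\cos(2\pi q/K)<1$. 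Uniformity for $t$ on compacta --- which the Hal\'asz input requires and which you do not address --- is then obtained from Dini's theorem after rewriting the claim as the statement, monotone in $\sigma$, that $\exp\bigl(-\int_1^\infty x^{-\sigma}(1-\cos(t\log x-2\pi q/K))\,\mathrm{d}\Pi(x)\bigr)=o(1)$. In short, the step you identify as the main obstacle is indeed the heart of the proof, and the route you sketch for it is not the one that works.
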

A crucial ingredient of the proof is a recently established version \cite{d-m-v} of Hal\'asz's theorem for Beurling numbers. This topic was introduced by Zhang in \cite{zhang1987} whose results he later improved upon in \cite{zhang2018} based on ideas of \cite{d-d-v}. The paper \cite{d-m-v} refines these ideas further and contains the best results currently available.

\section{Proof of Theorem \ref{thprescheb}}

Let $q$ be an integer in $[0,K)$. We define $f_{q}(n)$ depending on the number of prime factors of $n$, namely,
\begin{equation} \label{eqpresdeffq}
 f_{q}(n) = e^{2\pi iql/K} \text{ if the number of prime factors of } n \text{ is } l \;\mathrm{mod}\; K.
\end{equation}
Then
\begin{equation} \label{eqpresorth}
 S_{K,c}(x) = \sum_{n \leq x} \frac{1}{K} \sum_{q = 0}^{K-1} e^{-2\pi iqc/K} f_{q}(n).
\end{equation}
We shall show that the summatory function $F_{q}(x) := \sum_{n \leq x} f_{q}(n) = o(x)$, unless $q = 0$, in which case $F_{0}(x) = N(x) \sim ax$. The orthogonality relation \eqref{eqpresorth} would then complete the proof. Let us now calculate the generating function $\hat{F_{q}}$ of the multiplicative function $f_{q}$, for $\operatorname*{Re} s > 1$, 
\begin{equation} \label{eqpreseuler}
 \hat{F_{q}}(s) = \sum_{n} \frac{f_{q}(n)}{n^{s}} = \prod_{p}\left(1 + e^{2\pi iq/K}p^{-s} + e^{4\pi iq/K}p^{-2s} + \dots \right) = \prod_{p} \frac{1}{1- e^{2\pi iq/K}p^{-s}}.
\end{equation}
Taking logarithms, one obtains
\begin{equation} \label{eqpreslogeuler}
 \log \hat{F_{q}}(s) = \sum_{p}\sum_{k = 1}^{\infty} \frac{e^{2\pi ikq/K}p^{-ks}}{k}.
\end{equation}
Therefore, $\log \hat{F_{q}}(s)$ is the Mellin-Stieltjes transform of the measure $h_{q}\mathrm{d}\Pi$, where $h_{q}$ is a function supported on prime powers and satisfying $h_{q}(p^{k})= e^{2\pi ikq/K}$. It follows that $\hat{F_{q}}(s)$ is the Mellin-Stieltjes tranform of $\exp^{\ast}(h_{q}\mathrm{d\Pi})$, where $\exp^{\ast}$ is the exponential taken with respect to the multiplicative convolution of measures (see e.g. \cite{b-d} or \cite{d-zbook}). Hence, $\mathrm{d}F_{q} = \exp^{\ast}(h_{q}\mathrm{d\Pi})$, as the Mellin-Stieltjes transform is an injective operation.\par
We now wish to apply Theorem 2.3 from \cite{d-m-v}. We split $h_{q}$ into $g_{1} + g_{2}$ as $e^{2\pi iq/K} + (h_{q}-e^{2\pi iq/K})$. The assumptions for $g_{1}$ and $g_{2}$ from Theorem 2.3 are fulfilled because of the Chebyshev bound and the fact that $(h_{q}-e^{2\pi iq/K}) \mathrm{d}\Pi$ is supported only on (higher order) prime powers. It now remains only to verify that the Mellin-Stieltjes transform of $\exp^{\ast}(h_{q}\mathrm{d\Pi})=o(1/(\sigma - 1))$ uniformly for $t$ on compacts as $\sigma \rightarrow 1^{+}$. It suffices to show the Mellin-Stieltjes transform of $\exp^{\ast}(e^{2\pi iq/K}\mathrm{d\Pi})$ admits this bound, for, as shown in the proof of Theorem 2.3 in \cite{d-m-v}, the part coming from $g_{2}$ is harmless. We are thus left to show that, uniformly for $t$ on compacts,
\begin{equation} \label{eqpresmodzeta}
 \exp\left(e^{2\pi iq/K} \log \zeta(\sigma + it)\right) = o\left( \frac{1}{\sigma - 1}\right),
\end{equation}
which we do in the following lemma.
\begin{lemma} If $N$ has positive density $a$, then, for $q \in \{1, \dots, K-1\}$, the relation \eqref{eqpresmodzeta} holds uniformly for $t$ on compacts.
\end{lemma}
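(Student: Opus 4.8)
The plan is to reduce \eqref{eqpresmodzeta} to the divergence of a nonnegative quantity and then treat $t$ near and away from $0$ separately. Write $\theta=2\pi q/K$, so $\theta\in(0,2\pi)$ and $1-\cos\theta>0$. Taking moduli and using $\log\zeta(s)=\sum_p\sum_{k\ge1}k^{-1}p^{-ks}$ for $\operatorname{Re}s>1$, one finds
\[
 \bigl|\exp\!\bigl(e^{i\theta}\log\zeta(\sigma+it)\bigr)\bigr|
 =\exp\!\bigl(\operatorname{Re}e^{i\theta}\log\zeta(\sigma+it)\bigr)
 =\zeta(\sigma)\,e^{-D(\sigma,t)},\qquad
 D(\sigma,t):=\sum_{p}\sum_{k\ge1}\frac{p^{-k\sigma}}{k}\bigl(1-\cos(\theta-kt\log p)\bigr)\ge0 .
\]
Since the positive density hypothesis gives $\zeta(\sigma)\sim a/(\sigma-1)$, the relation \eqref{eqpresmodzeta} (uniformly for $t$ on compacts) is equivalent to $D(\sigma,t)=\log\zeta(\sigma)-\operatorname{Re}e^{i\theta}\log\zeta(\sigma+it)\to+\infty$ uniformly for $|t|\le T$ as $\sigma\to1^{+}$, and this is what I would establish, splitting the range $|t|\le T$ at the scale $|t|\asymp(\sigma-1)\log\tfrac1{\sigma-1}$.

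For $|t|\le\varepsilon(\sigma-1)\log\tfrac1{\sigma-1}$ with $\varepsilon=\varepsilon(q,K)$ small, the Chebyshev bound gives $|\zeta'/\zeta(\sigma+iu)|\le-\zeta'/\zeta(\sigma)=O(1/(\sigma-1))$, hence $\log\zeta(\sigma+it)-\log\zeta(\sigma)=O\!\bigl(\varepsilon\log\tfrac1{\sigma-1}\bigr)$ on this subrange; feeding this into $D$ yields $D(\sigma,t)=(1-\cos\theta)\log\zeta(\sigma)+O\!\bigl(\varepsilon\log\tfrac1{\sigma-1}\bigr)\ge\tfrac12(1-\cos\theta)\log\tfrac1{\sigma-1}\to\infty$, uniformly, once $\varepsilon$ is small. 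For the complementary range $\varepsilon(\sigma-1)\log\tfrac1{\sigma-1}\le|t|\le T$ I would invoke the density hypothesis through $\zeta(s)=\frac{a}{s-1}+a+s\int_1^\infty(N(x)-ax)x^{-s-1}\,\mathrm{d}x$: since $N(x)-ax=o(x)$ one checks that $(\sigma-1)\,s\int_1^\infty(N(x)-ax)x^{-s-1}\,\mathrm{d}x\to0$ uniformly for $|t|\le T$, while on this range $|a/(\sigma-1+it)|\le a/|t|=o(1/(\sigma-1))$; hence $\zeta(\sigma+it)=o(1/(\sigma-1))$, and likewise $\zeta(\sigma+2it)=o(1/(\sigma-1))$, both uniformly.

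On this complementary range one then has, for suitable $\omega(\sigma),\omega'(\sigma)\to\infty$, the bounds $-\log\tfrac1{\sigma-1}+\omega'(\sigma)\le\log|\zeta(\sigma+it)|\le\log\tfrac1{\sigma-1}-\omega(\sigma)+O(1)$: the upper bound is immediate, and the lower bound follows from $\zeta(\sigma+2it)=o(1/(\sigma-1))$ together with the classical inequality $3+4\cos\phi+\cos2\phi\ge0$ in the shape $\zeta(\sigma)^3|\zeta(\sigma+it)|^4|\zeta(\sigma+2it)|\ge1$. Writing $D(\sigma,t)=\log\zeta(\sigma)-\cos\theta\,\log|\zeta(\sigma+it)|+\sin\theta\,\arg\zeta(\sigma+it)$, the first two terms already exceed $\min(\omega(\sigma),\omega'(\sigma))-O(1)\to\infty$, so all that remains is to see that the term $\sin\theta\,\arg\zeta(\sigma+it)$ cannot undo this, i.e.\ that $\arg\zeta(\sigma+it)=o\!\bigl(\log\tfrac1{\sigma-1}\bigr)$ (in fact it stays bounded) uniformly on this range. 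I expect this last point to be the main obstacle: the hypothesis $N(x)\sim ax$ only pins $\zeta$ down off the real axis up to $o(1/(\sigma-1))$, so the smallness of $\arg\zeta(\sigma+it)$ is not automatic and must be extracted by a separate argument — morally, any off-axis growth or decay of $\zeta$ caused by $N(x)-ax$ occurs in an essentially fixed complex direction, so $\arg\zeta(\sigma+it)$ does not wind. Combining the two subranges gives $D(\sigma,t)\to+\infty$ uniformly for $|t|\le T$, which is \eqref{eqpresmodzeta}.
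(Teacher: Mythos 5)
Your reduction of \eqref{eqpresmodzeta} to the divergence $D(\sigma,t)\to+\infty$ is correct and is in fact the paper's own starting point (it is the reformulation \eqref{eqpresdini}), and your treatment of the range $|t|\le\varepsilon(\sigma-1)\log\frac{1}{\sigma-1}$ works (note, though, that it leans on the Chebyshev bound, which is not among the lemma's stated hypotheses --- only density is --- even if it is available in the ambient theorem). The genuine gap is exactly where you flag it. On the complementary range your argument controls only $\log|\zeta(\sigma+it)|$, while $D(\sigma,t)$ contains the term $\sin\theta\,\arg\zeta(\sigma+it)$, and $\operatorname{Im}\log\zeta(\sigma+it)=-\sum_{p}\sum_{k\ge1}k^{-1}p^{-k\sigma}\sin(kt\log p)$ can a priori be as large as $\log\zeta(\sigma)\sim\log\frac{1}{\sigma-1}$ in absolute value. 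Under density plus a Chebyshev bound $\zeta$ need not even continue analytically past $\sigma=1$, so there is no zero-counting or continuity argument available, and the ``fixed complex direction'' heuristic is not a proof. The classical inequality $3+4\cos\phi+\cos 2\phi\ge0$ cannot close the gap either, because the phase shift $\theta=2\pi q/K$ survives in the second harmonic: with $x=t\log u-\theta$ one gets $\cos(2t\log u-2\theta)$, i.e.\ $\operatorname{Re}\bigl(e^{-2i\theta}\log\zeta(\sigma+2it)\bigr)$, which again involves an uncontrolled argument. As written, the proposal is incomplete at its central step.

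The paper's resolution is a trigonometric inequality tailored to the phase: $M-1-M\cos x+\cos(Kx)\ge0$ for $M\ge K^{2}$, applied with $x=t\log u-2\pi q/K$. The point is that the $K$-th harmonic annihilates the shift, since $\cos(Kt\log u-2\pi q)=\cos(Kt\log u)$; integrating against $u^{-\sigma}\,\mathrm{d}\Pi(u)$ and exponentiating yields $\zeta(\sigma)^{M-1}\exp\bigl(-M\operatorname{Re}(e^{i\theta}\log\zeta(\sigma+it))\bigr)\,|\zeta(\sigma+iKt)|\ge1$, so only $\zeta(\sigma)\sim a/(\sigma-1)$ and $|\zeta(\sigma+iKt)|=o_{t}(1/(\sigma-1))$ (the latter a consequence of density for $t\ne0$) are needed, and $\arg\zeta(\sigma+it)$ never enters. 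Uniformity on compacts is then obtained not by splitting ranges but by noting that $e^{-D(\sigma,t)}$ is a net of continuous functions of $t$, monotone in $\sigma$ as $\sigma\downarrow1$, converging pointwise to $0$, so Dini's theorem applies. You should either adopt such a $K$-dependent inequality or supply an actual proof that $\arg\zeta(\sigma+it)=o\bigl(\log\frac{1}{\sigma-1}\bigr)$ uniformly on your outer range; without one of these the argument does not go through.
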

\begin{proof} The proof of this lemma goes along similar lines as the one of \cite[Lemma 3.6]{d-d-v}. First we show \eqref{eqpresmodzeta} pointwise. If $t = 0$, then $|\exp\left(e^{2\pi iq/K} \log \zeta(\sigma)\right)| = |\zeta(\sigma)|^{\cos(2\pi q/K)} = o(1/(\sigma - 1))$ as $\zeta(\sigma) \sim  a/(\sigma-1)$, which is implied by density. For $t \neq 0$, we shall employ the trigonometrical inequality
\begin{equation} \label{eqpresgonin}
  M-1 - M \cos(x) + \cos(Kx) \geq 0,
\end{equation}
which is valid for all real $x$ and positive integers $K$ as long as $M \geq K^2$. Indeed, let $f(x)$ be the left-hand side of \eqref{eqpresgonin}. As $f$ is $2\pi$-periodic it suffices to show \eqref{eqpresgonin} for $|x| \leq \pi$, or even only for $|x| \leq \sqrt{6}/\sqrt{M}$, as the only potential violations can occur when $\cos(x) \geq (M-2)/M$, and $\cos(x) \leq 1- x^{2}/3$ for $|x| \leq 2$, say. As $f(0) = 0$ and $f'(0) = 0$, \eqref{eqpresgonin} will follow if $f''(x) \geq 0$ for $|x| \leq \sqrt{6}/\sqrt{M}$, allowing a possible equality only when $x = 0$. As $f''(x) = M\cos x - K^{2} \cos(Kx)$, positivity follows as $\cos(Kx) \leq \cos(x)$ for $|x| \leq \sqrt{6}/\sqrt{M}$, with equality only if $x = 0$. (Note that $|Kx| \leq \sqrt{6}$ in this region, so $Kx$ cannot wander fully around the circle.) This completes the verification of \eqref{eqpresgonin}.\par
We employ the trigonometrical inequality \eqref{eqpresgonin} to find
\begin{equation}
 \int^{\infty}_{1} x^{-\sigma}\left(M - 1 - M\cos(t\log x -2\pi q/K) + \cos(Kt\log x)\right)\mathrm{d}\Pi(x) \geq 0,
 \end{equation}
or, after exponentiation
\begin{equation}
 \zeta(\sigma)^{M-1} \exp(-M \operatorname*{Re}(e^{2\pi i q/K} \log \zeta(\sigma + it)))  |\zeta(\sigma + iKt)| \geq 1.
\end{equation}
The pointwise result now follows as density implies $\zeta(\sigma) \sim a/(\sigma - 1)$ and $\zeta(\sigma + iKt) = o_{t}(1/(\sigma - 1))$.\par Now, \eqref{eqpresmodzeta} is equivalent to
\begin{equation} \label{eqpresdini}
 \exp\left( - \int^{\infty}_{1} x^{-\sigma}(1 - \cos(t\log x - 2\pi q /K)) \mathrm{d}\Pi(x)\right) = o(1).
\end{equation}
Our previous considerations imply that the left-hand side of \eqref{eqpresdini}, a net of continuous functions, monotone in the variable $\sigma$ as $\sigma \downarrow 1$, tends pointwise to $0$. Therefore, Dini's theorem asserts that the convergence of \eqref{eqpresdini} must happen uniformly for $t$ on compact sets.
\end{proof}


\begin{remark} We also observe that Theorem \ref{thprescheb} remains true if we change the definition of $S_{K,c}(x)$ to the number of integers below $x$ for which the number of \emph{distinct} prime factors is $c \;\mathrm{mod}\; K$. The proof is similar to the one given above. In the definition \eqref{eqpresdeffq} of $f_{q}$, one simply adds the word ``distinct" at the appropriate place. The orthogonality relation  \eqref{eqpresorth} remains valid. The generating function of $f_{q}$ becomes
\begin{equation} \label{eqpresaltgen}
 \hat{F_{q}}(s) = \prod_{p} \left(1 + e^{2\pi i q/K} p^{-s} + e^{2\pi i q/K} p^{-2s} + \dots \right)
\end{equation}
Proceeding in the same fashion as above, one obtains after some computations 
\begin{equation} \label{eqpresaltlog}
 \log \hat{F_{q}}(s) = \sum_{p}\sum_{k = 1}^{\infty} \frac{(1 - (1-e^{2\pi iq/K })^{k})p^{-ks}}{k}.
\end{equation}
Now however, we will split the measure $h_{q} \mathrm{d}\Pi$ into three pieces. We set $g_{1}(p^{k}) = e^{2\pi i q/K}$,
\begin{equation} \label{eqpresg2} 
 g_{2}(p^{k}) = 
\begin{cases}
 1-e^{2\pi iq/K} - (1-e^{2\pi iq/K})^{k} & \text{if } p > 2,\\
  -e^{2\pi iq/K} & \text{if } p \leq 2,
\end{cases}
\end{equation}
and $g_{3}(p^{k}) = 1-(1-e^{2\pi iq/K})^{k}$ if $p \leq 2$ and $0$ otherwise. Now, as $|g_{2}(p^{k})| \leq 2^{k}$ for $p > 2$ and is bounded for $p \leq 2$, the hypothesis $\int^{\infty}_{1} |g_{2}(u)|u^{-1}\mathrm{d}\Pi(u) < \infty$ from \cite[Th. 2.3]{d-m-v} is fulfilled and one can proceed exactly as above to obtain $\int^{x}_{1^{-}}\exp^{\ast}((g_{1}+g_{2}) \mathrm{d}\Pi) = o(x)$ for $q \neq 0$. Now the Mellin-Stieltjes transform of $\exp^{\ast}(g_{3} \mathrm{d}\Pi)$ is 
\begin{equation} \label{eqpreseulerg3}
 \prod_{p \leq 2} \left(1+  \sum_{k = 1}^{\infty} \frac{f_{q}(p^{k})}{p^{ks}}\right),
\end{equation}
and is absolutely convergent for $\sigma > 0$ by virtue of $|f_{q}| \leq 1$. Applying \cite[Lemma 3.4(i)]{d-m-v} now gives
\begin{equation} \label{eqpresconv}
F_{q}(x) = \int^{x}_{1^{-}} \exp^{\ast}((g_{1}+g_{2}) \mathrm{d}\Pi) \ast \exp^{\ast}(g_{3} \mathrm{d}\Pi) = o(x) 
\end{equation}
and therefore also $S_{K,c}(x) \sim ax/K$.
\end{remark}

\begin{remark} The density condition in Theorem \ref{thprescheb} can be replaced with the apparently weaker hypothesis of logarithmic density, that is, the following limit
\begin{equation}\label{eqpreslogdens}
 \lim_{x \rightarrow \infty} \int^{x}_{1^{-}} \frac{\mathrm{d}N(u)}{u}
\end{equation}
exists and is positive. It can be easily seen via partial integration that density implies logarithmic density unconditionally. On the other hand it follows from Corollary 2.2 from \cite{d-m-v} that logarithmic density implies density under a Chebyshev upper bound condition, as logarithmic density implies equation (2.5) of that paper.
\end{remark}

\end{document}